\definecolor{my_color}{rgb}{0,0.5,0.5}
\definecolor{MIXT}{rgb}{0.8,0.5,0.2}
\definecolor{mixt}{rgb}{0.5,0.3,0.2}
\definecolor{sin}{rgb}{0,0.5,0.5}
\definecolor{darkblue}{rgb}{0,0.1,0.8}
\definecolor{redi}{rgb}{0.5,0,0.4}
\numberwithin{equation}{section}
\font\tencyr=wncyr10 
\font\tencyi=wncyi10 
\font\tencysc=wncysc10 
\def\rus{\tencyr\cyracc}
\def\rusi{\tencyi\cyracc}
\def\rusc{\tencysc\cyracc}
\newtheorem{thm}{Theorem}[section]
\newtheorem{lm}[thm]{Lemma}
\newtheorem{cl}[thm]{Corollary}
\theoremstyle{remark}
\newtheorem{rmk}[thm]{Remark}
\theoremstyle{definition}
\newtheorem{ex}[thm]{Example} 
\newtheorem{df}{Definition}
\newcommand {\be}{{\mathfrak b}}
\newcommand {\f}{{\mathfrak f}}
\newcommand {\g}{{\mathfrak g}}
\newcommand {\h}{{\mathfrak h}}
\newcommand {\me}{{\mathfrak m}}
\newcommand {\q}{{\mathfrak q}}
\newcommand {\te}{{\mathfrak t}}
\newcommand {\ut}{{\mathfrak u}}
\newcommand {\z}{{\mathfrak z}}
\newcommand {\gln}{{\mathfrak{gl}}_n}
\newcommand{\gt}{\mathfrak}
\newcommand {\eus}{\EuScript}
\newcommand {\gA}{{\eus A}}
\newcommand {\gS}{{\eus S}}
\newcommand {\U}{{\eus U}}
\newcommand {\gZ}{{\eus Z}}
\newcommand{\cam}{\eus{MF}}
\newcommand {\ap}{\alpha}
\newcommand {\vp}{\varphi}
\newcommand {\ca}{{\mathcal A}}
\newcommand {\BV}{{\mathbb V}}
\newcommand {\BZ}{{\mathbb Z}}
\newcommand {\BN}{{\mathbb N}}
\newcommand{\id}{{\mathsf{id}}}
\newcommand {\codim}{{\mathrm{codim\,}}}
\newcommand {\cha}{{\mathrm{char}}}
\newcommand {\ind}{{\mathrm{ind\,}}}
\newcommand {\Lie}{{\mathsf{Lie}}}
\newcommand {\Ima}{{\mathrm{Im\,}}}
\newcommand {\rk}{{\mathsf{rk\,}}}
\newcommand {\trdeg}{{\mathrm{tr.deg\,}}}
\newcommand {\tri}{\mathfrak{sl}_2}
\newcommand {\bb}{{\boldsymbol{b}}}
\newcommand {\PC}{Poisson commutative}
\newcommand {\beq}{\begin{equation}}
\newcommand {\eeq}{\end{equation}}
\renewcommand{\le}{\leqslant}
\renewcommand{\ge}{\geqslant}
\renewcommand{\lg}{\langle}
\newcommand{\rg}{\rangle}
\newcommand {\bbk}{\Bbbk}
\begin{document}
\setlength{\parskip}{3pt plus 2pt minus 0pt}
\hfill { {\color{blue}\scriptsize December 7, 2020}}
\vskip1ex

\title[Reductive subalgebras and Poisson commutativity] 
{Reductive subalgebras of 
semisimple Lie algebras and  Poisson commutativity} 
\author[D.\,Panyushev]{Dmitri I. Panyushev}
\address[D.\,Panyushev]%
{Institute for Information Transmission Problems of the R.A.S., Bolshoi Karetnyi per. 19,
Moscow 127051, Russia}
\email{panyushev@iitp.ru}
\author[O.\,Yakimova]{Oksana S.~Yakimova}
\address[O.\,Yakimova]{Institut f\"ur Mathematik, Friedrich-Schiller-Universit\"at Jena, Jena, 07737, Deutschland}
\email{oksana.yakimova@uni-jena.de}
\thanks{The first author is partially supported by R.F.B.R. grant {\rus N0} 20-01-00515.
The second author is funded by the Deutsche Forschungsgemeinschaft (DFG, German Research Foundation) --- project number 454900253.}
\keywords{Poisson bracket, coadjoint representation, reductive subalgebra}
\subjclass[2010]{17B63, 14L30, 17B08, 17B20, 22E46}
\maketitle
\begin{abstract}
Let $\g$ be a semisimple Lie algebra, $\h\subset\g$ a reductive subalgebra such that 
$\h^\perp$ is a
complementary $\h$-submodule of $\g$. In 1983, Bogoyavlenski claimed that one obtains a Poisson
commutative subalgebra of the symmetric algebra $\gS(\g)$ by taking the subalgebra $\gZ$ generated by
the bi-homogeneous components of all $H\in\gS(\g)^\g$. But this is false, and
we present a counterexample. We also provide a criterion for the Poisson commutativity of such
subalgebras $\gZ$. As a by-product, we prove that $\gZ$ is Poisson commutative if $\h$ is abelian
and describe $\gZ$ in the special case when $\h$ is a Cartan subalgebra. In this case, $\gZ$ appears
to be polynomial and has the maximal transcendence degree $\bb(\g)=\frac{1}{2}(\dim\g+\rk\g)$.
\end{abstract}


\section*{Introduction}

\noindent
\subsection{} The ground field $\bbk$ is algebraically closed and $\cha(\bbk)=0$.
For any finite-dimensional Lie algebra $\q$, the dual space $\q^*$ has a Poisson structure. The algebra
of polynomial functions on $\q^*$, $\bbk[\q^*]$, is isomorphic to the graded symmetric algebra $\gS(\q)$
and the Lie--Poisson bracket $\{\,\,,\,\}$ is defined on the elements of degree one by $\{\xi,\eta\}=[\xi,\eta]$
for $\xi,\eta\in\q$.
There is a method for constructing ``large'' Poisson commutative subalgebras of $\gS(\q)$ that
exploits pairs of {\it compatible Poisson brackets}, see~\cite[Sect.~10]{GZ}, \cite{ukr}.
To apply this, one needs a suitable second Poisson bracket beside $\{\,\,,\,\}=\{\,\,,\,\}_{\q}$.
Let us recall some situations, where this "general method" (=\,method of compatible Poisson brackets) works.

{\bf I.} The  celebrated ``argument shift method'' goes back to~\cite{mf} (if $\q$ is semisimple). It employs an arbitrary $\gamma\in\q^*$ and the Poisson bracket $\{\,\,,\,\}_{\gamma}$, where
$\{x,y\}_{\gamma}=\gamma([x,y])$ for $x,y\in\q$. The brackets $\{\,\,,\,\}$ and $\{\,\,,\,\}_{\gamma}$ are
compatible, and the general method produces the {\it Mishchenko--Fomenko subalgebra\/}
(=\,$\eus{MF}$-{\it subalgebra})
$(\cam)_\gamma\subset \gS(\q)$.
Let $\gS(\q)^\q$ be the {\it Poisson centre\/} of  $(\gS(\q), \{\ ,\ \})$, i.e.,
\[
    \gS(\q)^\q=\{H\in \gS(\q)\mid \{H,x\}=0 \ \ \forall x\in\q\} .
\]
For $F\in\gS(\q)$, let $\partial_{\gamma} F$ be the directional derivative of $F$
with respect to $\gamma\in\q^*$, i.e.,
\[
    \partial_{\gamma}F(x)=\frac{\textsl{d}}{\textsl{d}t} F(x+t\gamma)\Big|_{t=0}.
\]
By the original definition of the $\eus{MF}$-subalgebras~\cite{mf}, $(\cam)_\gamma$ is generated by all
$\partial_\gamma^k F$ with $k\ge 0$ and $F\in\gS(\q)^{\q}$. Since then, the algebras $(\cam)_\gamma$
and their quantum counterparts attracted a great deal of attention, see e.g.~\cite{FFR,m-y,vi90} and
references therein. If $\q$ is reductive and $\gamma$ is regular in $\q^*$, then $(\cam)_\gamma$ is a
maximal Poisson commutative subalgebra  in $\gS(\q)$ of maximal transcendence degree~\cite{mrl}.

{\bf II.} \ Let $\q=\q_0\oplus\q_1$ be a $\BZ_2$-grading, i.e., $[\q_i,\q_j]\subset \q_{i+j\!\pmod 2}$. Then
$\q$ admits the In\"onu--Wigner contraction to the semi-direct product $\tilde\q=\q_0\ltimes\q_1^{\sf ab}$,
and the second bracket is the Lie--Poisson bracket of  $\tilde\q$. (Here $\q$ and $\tilde\q$
are identified as vector spaces.) The compatibility of $\{\,\,,\,\}_{\q}$ and $\{\,\,,\,\}_{\tilde\q}$ stems from
the presence of $\BZ_2$-grading, cf. Section~\ref{subs:contr-compat}. The sum $\q=\q_0\oplus\q_1$
determines the bi-homogeneous decomposition
$\gS(\q)=\bigoplus_{i,j\ge 0} \gS^i(\q_0)\otimes \gS^j(\q_1)$.
Here the general method yields the Poisson commutative subalgebra generated by the bi-homogeneous
components of all $H\in\gS(\q)^\q$. This case has been studied in \cite{m84} and recently in our
article~\cite{OY-alg}. For substantial applications, one has to assume, of course, that $\q$ is semisimple.

\subsection{}      \label{subs:error}
Soon after~\cite{OY-alg} has been accepted,
we came across an article of Bogoyavlenski~\cite{bo}. He
claims that if $\g$ is semisimple, $\f\subset\g$ is reductive and the Killing form of $\g$ is non-degenerate
on $\f$,  then the direct sum $\g=\f\oplus\me$, where $\me=\f^\perp$, allows to  construct  similarly a
Poisson commutative subalgebra of $\gS(\g)$. Namely, a special case of~\cite[Theorem~1]{bo}
(with $n=k=j=1$ in  the original notation) asserts that the bi-homogeneous components of all
$F\in\gS(\g)^\g$ generate a Poisson commutative subalgebra. However,
this is false and we provide a counterexample to that claim.
An explanations for that error is that
here one can also consider the contraction $\tilde\g=\f\ltimes\me^{\sf ab}$  and the Poisson bracket
$\{\,\,,\,\}_{\tilde\g}$ on the vector space $\g\simeq\tilde\g$, but the brackets $\{\,\,,\,\}_{\g}$ and
$\{\,\,,\,\}_{\tilde\g}$ are not necessarily compatible. One can also notice that Bogoyavlenski did not
properly distinguish a Lie algebra and its dual, and his usage of differentials of elements of $\gS(\g)$
is sloppy.

Our main motivation for writing this note was just to clarify and remedy this situation.
However, we also discovered some exciting new phenomena.
Let $\g=\Lie(G)$ be semisimple and $\g=\f\oplus\me$ as above. 
Let $\gZ_{(\g,\f)}$ be the
subalgebra of $\gS(\g)$ generated by the bi-homogeneous components of all $F\in \gS(\g)^\g$. The
results of this note are:
\begin{itemize}
\item[\sf 1)] we provide a criterion for $\gZ_{(\g,\f)}$ to be Poisson commutative;

\item[\sf 2)] using our criterion we prove that $\gZ_{(\mathfrak{sl}_4,\tri)}$ is not Poisson commutative
for the standard embedding $\tri\subset \mathfrak{sl}_4$;

\item[\sf 3)] a corollary of our criterion is that $\gZ_{(\g,\f)}$ is Poisson commutative whenever $\f$ is abelian (e.g. if $\f$ is the Lie algebra of a torus in $G$);

\item[\sf 4)] it is proved that if $\f=\te$ is a Cartan subalgebra of $\g$, then $\gZ_{(\g,\te)}$ is polynomial,
$\trdeg \gZ_{(\g,\te)}=\bb(\g)$, and $\gZ$ is complete on every regular $G$-orbit in $\g$.

\item[\sf 5)] We point out an algebraic extension $\tilde\gZ\supset \gZ_{(\g,\te)}$ such that $\tilde\gZ$
is a {\bf maximal} Poisson commutative subalgebra of $\gS(\g)$  and is still polynomial.
\end{itemize}

\section{Preliminaries on the coadjoint representation}
\label{sect:prelim}

\noindent
Let $Q$\/ be a connected affine algebraic group with $\Lie(Q)=\q$. The symmetric algebra
$\gS(\q)$ over $\bbk$ is identified with the graded algebra of polynomial functions on $\q^*$, and we also
write $\bbk[\q^*]$ for it.
\\ \indent
Let $\q^\xi$ denote the stabiliser in $\q$ of $\xi\in\q^*$. The {\it index of}\/ $\q$, $\ind\q$, is the minimal codimension of $Q$-orbits in $\q^*$. Equivalently,
$\ind\q=\min_{\xi\in\q^*} \dim \q^\xi$. By Rosenlicht's theorem~\cite[I.6]{brion}, one also has
$\ind\q=\trdeg\bbk(\q^*)^Q$. The Lie--Poisson bracket for
$\bbk[\q^*]$ is defined on the elements of degree $1$ (i.e., on $\q$) by $\{x,y\}:=[x,y]$.
Set further $\hat\gamma(x,y)=\gamma([x,y])$ for $\gamma\in\q^*$.
For any $F_1,F_2\in\gS(\q)$ and $\gamma\in\q^*$, we have
\begin{equation} \label{P-gamma}
\{F_1,F_2\}(\gamma)=\hat\gamma(\textsl{d}_\gamma F_1,\textsl{d}_\gamma F_2)
\end{equation}
where $\textsl{d}_\gamma F\in \q$ is the differential of $F\in\gS(\q)$ at $\gamma$.
As $Q$ is connected, we have $\gS(\q)^\q=\gS(\q)^{Q}=\bbk[\q^*]^Q$.
The set of $Q$-{\it regular\/} elements of $\q^*$ is
\beq       \label{eq:regul-set}
    \q^*_{\sf reg}=\{\eta\in\q^*\mid \dim \q^\eta=\ind\q\} .
\eeq
Set $\q^*_{\sf sing}=\q^*\setminus \q^*_{\sf reg}$.
We say that $\q$ has the {\sl codim}--$n$ property if $\codim \q^*_{\sf sing}\ge n$.  By~\cite{ko63}, the semisimple algebras $\g$ have the {\sl codim}--$3$ property.

Set $\bb(\q)=(\dim\q+\ind\q)/2$.
Since the coadjoint orbits are even-dimensional, this number is an integer. If $\q$ is reductive, then
$\ind\q=\rk\q$ and $\bb(\q)$ equals the dimension of a Borel subalgebra. A subalgebra $\gA\subset \gS(\q)$ is said to be {\it Poisson commutative} if $\{\gA,\gA\}=0$.
If $\gA\subset \gS(\q)$ is Poisson commutative, then $\trdeg\gA\le \bb(\q)$, see e.g.~\cite[0.2]{vi90}.

\begin{df} \label{df-compl}
A Poisson commutative subalgebra
 $\gA\subset \gS(\q)$ is said to be {\it complete} on a coadjoint orbit $Q\gamma\subset\q^*$ if
 $\trdeg (\gA\vert_{Q\gamma}) = \frac{1}{2}\dim (Q\gamma)$.
\end{df}
\noindent
The notion of completeness originates from the theory of integrable systems. 

For a subalgebra $A\subset \gS(\q)$ and $\gamma\in\q^*$, set
$\textsl{d}_\gamma A=\left< \textsl{d}_\gamma F \mid F\in A\right>_{\bbk}$.

\subsection{Decompositions and compatibility}
\label{subs:contr-compat}
Let  $\q=\f \oplus V$ be a vector space decomposition,
where $\f$ is a subalgebra. For any $s\in\bbk^{\times}$, define a linear map
$\vp_s\!:\q\to\q$ by
setting $\vp_s\vert_{\f}=\id$, $\vp_s\vert_{V}=s{\cdot}\id$. Then $\vp_s\vp_{s'}=\vp_{ss'}$ and
$\vp_s^{-1}=\vp_{s^{-1}}$, i.e., this yields a one-parameter subgroup of $\mathrm{GL}(\q)$.
For each $s$, the formula
\beq     \label{eq:fi_s}
    [x,y]_{(s)}=\vp_s^{-1}([\vp_s(x),\vp_s(y)])
\eeq
defines a modified Lie algebra structure 
on the vector space $\q$. 
All these structures are isomorphic to the initial one. The corresponding Poisson bracket is denoted by
$\{\,\,,\,\}_{(s)}$. We naturally extend $\vp_s$ to an automorphism of $\gS(\q)$.  Then the centre of the
Poisson algebra $(\gS(\q),\{\,\,,\,\}_{(s)})$ equals $\vp_s^{-1}(\gS(\q)^{\q})$.  For $x\in\q$, write
$x=x_{\f}+x_V$ with $x_{\f}\in\f$, $x_V\in V$.

If $\q=\f \oplus V$ is a $\BZ_2$-grading,
i.e., $[\f,V]\subset V$ and $[V,V]\subset\f$, then $\{\,\,,\,\}_{(s)}=\{\,\,,\,\}_{(-s)}$ and
$\{\,\,,\,\}_{(s)}+\{\,\,,\,\}_{(s')}=2\{\,\,,\,\}_{(\tilde s)}$ with $2\tilde s^2=s^2+(s')^2$.
Therefore the  brackets $\{\,\,,\,\}_{(s)}$ are pairwise compatible and build a two-dimensional pencil.

\begin{lm}           \label{lm:dec}
Suppose that $\q=\f\oplus V$, where $\f\subset\q$ is a subalgebra and
$[\f,V]\subset V$. For any $x=x_\f+x_v, y=y_\f+y_V \in \q$, we have
\begin{equation} \label{dec-s}
  [x,y]_{(s)}=[x_{\f},x_\f] + [x_{\f},y_V]+[x_V,y_{\f}] + s[x_V,y_V]_V +s^2[x_V,y_V]_{\f} .
\end{equation}
\end{lm}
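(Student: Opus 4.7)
The statement is a direct computation from the definition, and I would simply unwind \eqref{eq:fi_s} on the decomposition $x = x_\f + x_V$, $y = y_\f + y_V$. The plan is as follows.

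First I would compute $\vp_s(x) = x_\f + s x_V$ and $\vp_s(y) = y_\f + s y_V$, and then expand the original bracket bilinearly:
\begin{equation*}
[\vp_s(x),\vp_s(y)] = [x_\f,y_\f] + s[x_\f,y_V] + s[x_V,y_\f] + s^2[x_V,y_V].
\end{equation*}
The next step is to sort the terms according to their $\f$- and $V$-components. The hypothesis that $\f$ is a subalgebra gives $[x_\f,y_\f]\in\f$; the hypothesis $[\f,V]\subset V$ gives $[x_\f,y_V],[x_V,y_\f]\in V$; and the term $[x_V,y_V]$ has no a priori constraint, so I decompose it as $[x_V,y_V]_\f+[x_V,y_V]_V$. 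Thus the $\f$-component of $[\vp_s(x),\vp_s(y)]$ is $[x_\f,y_\f]+s^2[x_V,y_V]_\f$ and its $V$-component is $s[x_\f,y_V]+s[x_V,y_\f]+s^2[x_V,y_V]_V$.

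Finally I would apply $\vp_s^{-1}$, which acts as the identity on $\f$ and as $s^{-1}\!\cdot\!\id$ on $V$. The $\f$-part is unchanged, while the $V$-part loses one factor of $s$. Putting everything together yields exactly
\begin{equation*}
[x,y]_{(s)} = [x_\f,y_\f] + [x_\f,y_V] + [x_V,y_\f] + s[x_V,y_V]_V + s^2[x_V,y_V]_\f,
\end{equation*}
which is the claimed formula \eqref{dec-s} (modulo the obvious typo $[x_\f,x_\f]$ in the first summand).

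There is no real obstacle here: the content of the lemma is simply the bookkeeping of how the rescaling $\vp_s$ interacts with a bracket whose only ``non-graded'' piece is $[V,V]_\f$. The one point worth emphasising in the write-up is that the factor $s$ (rather than $s^2$) in front of $[x_V,y_V]_V$ arises precisely because that summand starts with $s^2$ in $[\vp_s(x),\vp_s(y)]$ and then loses one $s$ upon applying $\vp_s^{-1}$ to its $V$-component; this is what makes the $\BZ_2$-graded case (where $[V,V]_V=0$) degenerate to a quadratic dependence on $s$ and produce the pencil mentioned just before the lemma.
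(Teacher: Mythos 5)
Your computation is correct and is exactly the ``straightforward computation'' that the paper's proof alludes to without writing out: apply $\vp_s$, expand bilinearly, sort into $\f$- and $V$-components using the hypotheses, and apply $\vp_s^{-1}$. You also correctly note the typo $[x_\f,x_\f]$ for $[x_\f,y_\f]$ in the statement.
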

\begin{proof}
The statement is verified by a straightforward computation.
\end{proof}

Assume that  $\q=\f\oplus V$ is an $\f$-stable decomposition. One of the  crucial properties of
$[\,\,,\,]_{(s)}$ is that if $x\in\f$ and $y\in\q$, then $[x,y]_{(s)}=[x,y]$ for all  $s\in\bbk$.
Then~\eqref{dec-s} shows also that  if $[\f,\q]\ne 0$ and  $[V,V]$ is not contained in either
$\f$ or $V$, then the  brackets $\{\,\,,\,\}_{(s)}$ do not build a two-dimensional pencil.

\section{A criterion for commutativity}
\label{sect:2}

\noindent
Let $\f$ be a subalgebra of $\q$. Suppose that there is an $\f$-stable decomposition $\q=\f\oplus\me$,
i.e., $[\f,\me]\subset\me$.  This yields a bi-homogeneous structure for $\gS(\q)$:
\[
    \gS(\q)=\bigoplus_{i,j \ge 0} \gS^i(\f)\otimes \gS^j(\me) .
\]
For any $H\in\gS(\q)$, we have $H=\sum_{i,j\ge 0}H_{(i,j)}$, where
$H_{(i,j)}\in \gS^i(\f)\otimes \gS^j(\me)$.
Let $\gZ_{(\q,\f)}$ be the subalgebra of $\gS(\g)$ generated by the bi-homogeneous components of
all $H\in\gS(\q)^{\q}$. Since each bi-homogeneous component of $H\in\gS(\q)^{\q}$ is
$\f$-invariant, we have $\gZ_{(\q,\f)}\subset\gS(\q)^{\f}$. It is claimed in~\cite[Theorem\,1]{bo} that if $\q$
is semisimple and $\f$ is reductive (so that an $\f$-stable decomposition of $\q$ does exist), then
$\gZ_{(\q,\f)}$ is Poisson commutative. However, this is {\bf false}! Below, we give a criterion for
the Poisson commutativity of $\gZ_{(\q,\f)}$ and provide a counterexample to the assertion of~\cite{bo}.
On the positive side, we deduce from our criterion that $\gZ_{(\q,\f)}$ is
Poisson commutative whenever $\f$ is an abelian subalgebra.

Given $\gamma\in\q^*$, we decompose it as $\gamma=\gamma_{\f}+\gamma_{\me}$, where
$\gamma_{\f}\vert_\me=0$ and $\gamma_{\me}\vert_\f=0$. Let $\vp_s : \q\to \q$ be the same as in
Section~\ref{subs:contr-compat} with $V=\me$. Set $\vp_s(\gamma)=\gamma_{\f}+s\gamma_{\me}$. It is well known and easily verified that, for any
$H\in\gS(\q)^{\q}$ and $\xi\in\q^*$, one has $\textsl{d}_\xi H\in \z(\q^\xi)$, where
$\z(\q^\xi)$ is the centre of $\q^\xi$. A standard calculation with differentials shows that
\beq      \label{d-phi}
        \textsl{d}_\gamma (\vp_s(F))=\vp_s(\textsl{d}_{\vp_s(\gamma)}F)
\eeq
for any $F\in\gS(\q)$.

\begin{thm}        \label{thm:com}
The subalgebra $\gZ=\gZ_{(\q,\f)}$ is Poisson commutative if and only if
\[
     \hat\gamma_{\f} \bigl((\textsl{d}_{\vp_s(\gamma)}H)_{\f},(\textsl{d}_{\vp_{s'}(\gamma)}H')_{\f}\bigr)=0
\]
for each  $\gamma\in\q^*$, all nonzero $s,s'\in\bbk$, and all $H,H'\in\gS(\q)^{\q}$.
\end{thm}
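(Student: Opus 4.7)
The plan is to work with the one-parameter family $\{\vp_s(H)\mid s\in\bbk^\times,\,H\in\gS(\q)^\q\}$ instead of directly with bi-homogeneous components. Since $\vp_s$ acts as multiplication by $s^j$ on $\gS^i(\f)\otimes\gS^j(\me)$, one has $\vp_s(H)=\sum_j s^j\sum_i H_{(i,j)}$, and a Vandermonde argument shows that the algebra generated by this family coincides with $\gZ$. Hence $\gZ$ is Poisson commutative if and only if $\{\vp_s(H),\vp_{s'}(H')\}=0$ for all nonzero $s,s'$ and all $H,H'\in\gS(\q)^\q$, and the latter vanishing is a polynomial identity in $(s,s')$.

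Next I would evaluate the bracket at a point. Fix $\gamma\in\q^*$ and set $X=\textsl{d}_{\vp_s(\gamma)}H$, $Y=\textsl{d}_{\vp_{s'}(\gamma)}H'$, with decompositions $X=X_\f+X_\me$ and $Y=Y_\f+Y_\me$. Combining~\eqref{P-gamma} with~\eqref{d-phi} gives
\begin{equation*}
\{\vp_s(H),\vp_{s'}(H')\}(\gamma)=\gamma\bigl([X_\f+sX_\me,\,Y_\f+s'Y_\me]\bigr).
\end{equation*}
Using $\f$-stability and writing $\gamma=\gamma_\f+\gamma_\me$, the right-hand side splits into the contributions of $\gamma_\f([X_\f,Y_\f])$, $\gamma_\me([X_\f,Y_\me])$, $\gamma_\me([X_\me,Y_\f])$, and of the two components of $\gamma([X_\me,Y_\me])=\gamma_\f([X_\me,Y_\me]_\f)+\gamma_\me([X_\me,Y_\me]_\me)$. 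The key input is the identity $\hat\xi(\textsl{d}_\xi F,z)=0$ valid for any $F\in\gS(\q)^\q$, $\xi\in\q^*$, $z\in\q$, coming from $\textsl{d}_\xi F\in\z(\q^\xi)$. Testing this for $H$ at $\xi=\vp_s(\gamma)$ against $z=Y_\f,Y_\me$ and for $H'$ at $\xi=\vp_{s'}(\gamma)$ against $z=X_\f,X_\me$ produces four linear relations that let me eliminate the four auxiliary scalars in favor of $\hat\gamma_\f(X_\f,Y_\f)$. After careful bookkeeping, the coefficients should collapse to
\begin{equation*}
\{\vp_s(H),\vp_{s'}(H')\}(\gamma)=-(s-1)(s'-1)\,\hat\gamma_\f\bigl((\textsl{d}_{\vp_s(\gamma)}H)_\f,(\textsl{d}_{\vp_{s'}(\gamma)}H')_\f\bigr).
\end{equation*}

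From this identity, both directions of the criterion are immediate: vanishing of the right-hand side yields Poisson commutativity, and conversely vanishing of the bracket forces the scalar factor $\hat\gamma_\f(\cdots)$ to be zero whenever $(s-1)(s'-1)\ne 0$, hence everywhere by polynomial density in $s,s'$. The main technical hurdle I anticipate is the elimination step: the four invariance equations must be solved simultaneously for the two components of $\gamma([X_\me,Y_\me])$, and this naturally proceeds by subtracting a pair of equations, so that intermediate steps impose the genericity assumption $s\ne s'$ (lifted at the end by polynomial density). The fact that the resulting coefficient assembles into the neat factor $-(s-1)(s'-1)$—with the zeros at $s=1$ and $s'=1$ reflecting that $\vp_1(H)=H\in\gS(\q)^\q$ is automatically Poisson central—serves as a reassuring sanity check.
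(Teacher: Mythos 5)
Your proposal is correct and follows essentially the same route as the paper: the Vandermonde reduction to the family $\vp_s(H)$, evaluation of the bracket at $\gamma$ via differentials at the shifted points $\vp_s(\gamma)$, and elimination of the mixed pairings using invariance relations (your four identities $\hat\xi(\textsl{d}_\xi F,z)=0$ encode exactly the same information as the paper's relations $\gamma([\vp_s(\xi),\vp_{s'}(\eta)]_{(s^{-1})})=0$ and $\hat\gamma(\f,\textsl{d}_\gamma\vp_{s}(H))=0$). The bookkeeping does work out: with $C=\hat\gamma_{\f}(X_\f,Y_\f)$ one gets $\gamma_{\f}([X_\me,Y_\me]_\f)=-C$ and $\gamma_{\me}([X_\me,Y_\me]_\me)=\frac{s+s'}{ss'}C$ for $s\ne s'$, yielding $\{\vp_s(H),\vp_{s'}(H')\}(\gamma)=-(s-1)(s'-1)\,C$ as you predicted --- a slightly cleaner packaging than the paper's separate treatment of the two implications, but the same proof.
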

\begin{proof}
It suffices to prove the assertion for homogeneous $H, H'\in\gS(\q)^\q$. Note that if $H\in\gS^d(\q)$ and
$H=\sum_{j=0}^d H_{(d-j,j)}$, then $\vp_s(H)=\sum_j s^j H_{(d-j,j)}$. Therefore,
employing 
the standard argument with the Vandermonde determinant, one shows that
\beq           \label{Vand}
          \gZ=\mathsf{alg}\langle\vp_s(H)\mid H\in\gS(\q)^{\q},s\in \bbk^\times\rangle.
\eeq
Hence the algebra $\gZ$ is Poisson commutative if and only if for all $H,H'\in\gS(\q)^{\q}$, all nonzero
$s,s'$, and any  $\gamma\in\q^*$, we have
\[
   A_{s,s'}=A_{s,s',H,H',\gamma}:=\hat\gamma(\textsl{d}_\gamma \vp_s(H),\textsl{d}_\gamma\vp_{s'}(H'))=0.
\]
Suppose that $H$, $H'$ and  $\gamma$ are fixed. Then there is no ambiguity in the use of $A_{s,s'}$.

Set $\xi=\textsl{d}_{\vp_s(\gamma)}H$ and $\eta=\textsl{d}_{\vp_{s'}(\gamma)}H'$. Since $\vp_s(H)$ belongs
to the Poisson centre of $(\gS(\q),\{\,\,,\,\}_{(s^{-1})})$, we derive from~\eqref{d-phi} that
\[
      \gamma([\textsl{d}_\gamma \vp_s(H),\textsl{d}_\gamma \vp_{s'}(H')]_{(s^{-1})})= \gamma([\vp_s(\xi),\vp_{s'}(\eta)]_{(s^{-1})})=\gamma([\xi_{\f}+s\xi_{\me},\eta_{\f}+s'\eta_{\me}]_{(s^{-1})})=0.
\]
Similarly, $\vp_{s'}(H')$ belongs to the Poisson centre of $(\gS(\q),\{\,\,,\,\}_{((s')^{-1})})$ and hence
\[
\gamma([\xi_{\f}+s\xi_{\me},\eta_{\f}+s'\eta_{\me}]_{((s')^{-1})})=0.
\]
For all $\tilde s\in\bbk^\times$ and $\tilde H\in\gS(\q)^{\q}$, we have
$\hat\gamma(\f,\textsl{d}_\gamma\vp_{\tilde s}(\tilde H))=0$.  Therefore,
$\hat\gamma(\vp_s(\xi),\eta_{\f})=\hat\gamma(\xi_{\f},\vp_{s'}(\eta))=0$. Thus,
\beq       \label{f}
     C:=\hat\gamma_{\f}((\textsl{d}_{\vp_s(\gamma)}H)_{\f},(\textsl{d}_{\vp_{s'}(\gamma)}H')_{\f})=
     \hat\gamma(\xi_{\f},\eta_{\f})=-s'\hat\gamma(\xi_{\f},\eta_{\me})=-s \hat\gamma(\xi_{\me},\eta_{\f}).
\eeq
Let us substitute this into the formulas
\begin{align*}
& \gamma([\vp_s(\xi),\vp_{s'}(\eta)]_{(s^{-1})})= \\
&\qquad \hat\gamma(\xi_{\f},\eta_\f)+s\hat\gamma(\xi_{\me},\eta_\f)+s'\hat\gamma(\xi_{\f},\eta_{\me})+\frac{s'}{s}\gamma_{\f}([\xi_{\me},\eta_{\me}]_{\f}) +s'\gamma_{\me}([\xi_{\me},\eta_{\me}]_{\me}) =0, \\
& \gamma([\vp_s(\xi),\vp_{s'}(\eta)]_{((s')^{-1})})= \\
& \qquad \hat\gamma(\xi_{\f},\eta_\f)+s\hat\gamma(\xi_{\me},\eta_\f)+s'\hat\gamma(\xi_{\f},\eta_{\me})+\frac{s}{s'}\gamma_{\f}([\xi_{\me},\eta_{\me}]_{\f}) +s\gamma_{\me}([\xi_{\me},\eta_{\me}]_{\me})=0,
\end{align*}
obtaining the equalities
\begin{align*}
& C-C-C + s^{-1}s'\gamma_{\f}([\xi_{\me},\eta_{\me}]_{\f}) +s'\gamma_{\me}([\xi_{\me},\eta_{\me}]_{\me}) = 0, \\
& C-C -C + s(s')^{-1}\gamma_{\f}([\xi_{\me},\eta_{\me}]_{\f}) +s\gamma_{\me}([\xi_{\me},\eta_{\me}]_{\me}) =0.
\end{align*}
Furthermore
$A_{s,s'}=-C+ ss'\gamma_{\f}([\xi_{\me},\eta_{\me}]_{\f}) +ss'\gamma_{\me}([\xi_{\me},\eta_{\me}]_{\me})$.

Suppose  that $C=0$, then
\begin{gather*}
s^{-1}\gamma_{\f}([\xi_{\me},\eta_{\me}]_{\f}) +\gamma_{\me}([\xi_{\me},\eta_{\me}]_{\me}) = 0, \\
(s')^{-1}\gamma_{\f}([\xi_{\me},\eta_{\me}]_{\f}) +\gamma_{\me}([\xi_{\me},\eta_{\me}]_{\me}) =0.
\end{gather*}
Thereby $(s^{-1}-(s')^{-1})\gamma_{\f}([\xi_{\me},\eta_{\me}]_{\f})=0$ and
$(s-s')\gamma_{\me}([\xi_{\me},\eta_{\me}]_{\me})=0$. If $s\ne s'$, then necessary  $A_{s,s'}=0$.
Since $A_{s,s'}$ is a polynomial in $s$ and $s'$ with constant coefficients, $A_{s,s'}=0$ for all nonzero
$s,s'$. This settles the `if' part.

In order to prove the `only if' implication, suppose that $A_{s,s'}=0$ for all  $s,s'\in\bbk^\times$. Then
$x=\gamma_{\f}([\xi_{\me},\eta_{\me}]_{\f})$ and
$y=\gamma_{\me}([\xi_{\me},\eta_{\me}]_{\me})$ satisfy
$s^{-1}s'x+s'y= s (s')^{-1} x +sy = ss'(x+y)=C$.
Assume that $s\ne s'$ and that $s,s'\ne 1$. Then
\[
\left\{ \begin{array}{r}
\frac{s'+s}{ss'}{\cdot} x + y = 0; \\
\frac{s+1}{s}{\cdot} x +  y=0,
    \end{array}
\right.
\]
and  the only solution of this system  is $x=y=0$. Hence $C=0$.
By the continuity in $s$ and $s'$, the equality $\hat\gamma_{\f}((\textsl{d}_{\vp_s(\gamma)}H)_{\f},(\textsl{d}_{\vp_{s'}(\gamma)}H')_{\f})=0$ holds for all  $s,s'\in\bbk^\times$.
\end{proof}

\begin{cl}          \label{cor:f-abelian}
If\/ $\f$ is an abelian Lie algebra, then $\gZ_{(\q,\f)}$ is Poisson commutative.
\end{cl}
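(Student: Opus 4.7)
The plan is to invoke Theorem~\ref{thm:com} directly: $\gZ_{(\q,\f)}$ is Poisson commutative precisely when
\[
\hat\gamma_{\f}\bigl((\textsl{d}_{\vp_s(\gamma)}H)_{\f},(\textsl{d}_{\vp_{s'}(\gamma)}H')_{\f}\bigr)=0
\]
for every $\gamma\in\q^*$, every pair $s,s'\in\bbk^\times$, and every $H,H'\in\gS(\q)^{\q}$. So the task reduces to checking that this bilinear expression vanishes identically under the hypothesis $[\f,\f]=0$.

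The key observation is that, by construction, both arguments of $\hat\gamma_{\f}$ are the $\f$-components of elements of $\q$, hence they themselves lie in $\f$. Since $\hat\gamma_{\f}(x,y)=\gamma_{\f}([x,y])$ by definition, and since $[\f,\f]=0$ when $\f$ is abelian, the bracket $[(\textsl{d}_{\vp_s(\gamma)}H)_{\f},(\textsl{d}_{\vp_{s'}(\gamma)}H')_{\f}]$ is identically zero in $\q$. Therefore $\hat\gamma_{\f}$ vanishes on $\f\times\f$, the criterion of Theorem~\ref{thm:com} is satisfied trivially, and $\gZ_{(\q,\f)}$ is Poisson commutative.

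There is essentially no obstacle here: once Theorem~\ref{thm:com} is in hand, abelianness of $\f$ kills the criterion for purely structural reasons, without any need to understand the differentials $\textsl{d}_{\vp_s(\gamma)}H$ or their $\f$-components in detail. The only thing worth flagging in the write-up is to remind the reader that $(\textsl{d}_{\vp_s(\gamma)}H)_{\f}\in\f$ by the very definition of the decomposition $\q=\f\oplus\me$, so that the hypothesis $[\f,\f]=0$ indeed applies to both slots of $\hat\gamma_{\f}$.
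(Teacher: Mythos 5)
Your proof is correct and is essentially identical to the paper's: the authors also observe that $[\f,\f]=0$ forces $[(\textsl{d}_{\vp_s(\gamma)}H)_{\f},(\textsl{d}_{\vp_{s'}(\gamma)}H')_{\f}]=0$, so the criterion of Theorem~\ref{thm:com} holds trivially.
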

\begin{proof}
Since $[\f,\f]=0$, we have
$[(\textsl{d}_{\vp_s(\gamma)}H)_{\f},(\textsl{d}_{\vp_{s'}(\gamma)}H')_{\f}]=0$
for each  $\gamma\in\q^*$, all nonzero $s,s'\in\bbk$, and all $H,H'\in\gS(\q)^{\q}$.
Hence $\gZ_{(\q,\f)}$ is Poisson-commutative by Theorem~\ref{thm:com}.
\end{proof}

Let $\g$ be a reductive Lie algebra. Then $\g$ is identified with $\g^*$ via a $G$-invariant non-degenerate
scalar product $(\,\,,\,)$ and $\gS(\g)^\g$ is a polynomial ring.
Let $\{H_1,\dots,H_l\}$ be a set of homogeneous algebraically independent generators of $\gS(\g)^\g$
with $\deg H_j=:d_j$.
By the {\it Kostant regularity criterion\/} for $\g$~\cite[Theorem~9]{ko63},
\beq          \label{eq:ko-re-cr}
      \text{ $\langle\textsl{d}_\xi H_j \mid 1\le j\le l\rangle_{\bbk}=\g^\xi$ \ if and only if \ $\xi\in\g^*_{\sf reg}$.}
\eeq
Recall that $\g^\xi=\z(\g^\xi)$ if and only if $\xi\in\g^*_{\sf reg}$~\cite[Theorem\,3.3]{p03}.

\begin{ex}       \label{contra-4}
If $\g=\gln$, then $x^k\in\g^x$ for any $x\in\g$ and $k\in\BN$. (Here $x^k$ is the usual matrix power.)
Moreover, if we identify $\g$ and $\g^*$, then
$\textsl{d}_x \gS(\g)^{\g}=\langle x^k \mid 0\le k < n\rangle_{\bbk}$.

Consider the pair $(\g,\f)=(\gt{gl}_4,\tri)$ with $\tri$ embedded in the right lower corner.
\\[.7ex]
\centerline{Take
$\gamma=\begin{pmatrix}
                       1 & 0 & 0 & 1 \\
                       0 & 0 & 1 & 0 \\
                       1 & 0 & 1 & 0 \\
                       0 & 1 & 0 & -1 \\
                       \end{pmatrix}$.  \
Then $\gamma_\f+s\gamma_{\me}=
\begin{pmatrix}
                       s & 0 & 0 & s \\
                       0 & 0 & s & 0 \\
                       s & 0 & 1 & 0 \\
                       0 & s & 0 & -1 \\
                       \end{pmatrix}$.  }
\\[.6ex]
Note that $\gamma_{\f}\ne 0$.
For any $k\ge 0$, $(\vp_s(\gamma))^k=(\gamma_{\f}+s\gamma_{\me})^k$ belongs to
$\textsl{d}_{\vp_s(\gamma)} \gS(\g)^{\g}$. Hence
$((\gamma_{\f}+s\gamma_{\me})^k)_{\f} \in (\textsl{d}_{\vp_s(\gamma)}\gS(\g)^{\g})_{\f}$.
Let us do calculations for $k=2,3$:
\[
(\gamma_\f+s\gamma_{\me})^2=\begin{pmatrix} 
s^2      & s^2 & 0 & s^2 {-} s\\
s^2      & 0 & s & 0 \\
s^2 {+} s & 0 & 1 & s ^2 \\
0          & -s & s^2 & 1 \\
\end{pmatrix}  
\quad \text{ and } \quad
(\gamma_\f+s\gamma_{\me})^3=\left(\begin{array}{cccc}
*  & *  & *  &  * \\
* & * & * & *  \\
* & * & 1 & s^3 \\
* & * & 0 & -1 \\
\end{array}\right).
\]
Let $e=\big(\begin{smallmatrix} 0 & 1\\ 0& 0\end{smallmatrix}\big)$,
$h=\big(\begin{smallmatrix} 1 & 0\\ 0& -1\end{smallmatrix}\big)$, and
$f=\big(\begin{smallmatrix} 0 & 0\\ 1& 0\end{smallmatrix}\big)$ be the standard basis of $\tri$. Then
\\[.6ex]
\centerline{$\gamma_{\f}=h$, \ $((\gamma_\f+s\gamma_{\me})^2)_{\f} = s^2(e+f)$, \ and
$((\gamma_\f+s\gamma_{\me})^3)_{\f} = s^3 e+ h$.}
\\[.6ex]
Therefore, if $s\ne 0$, then
$\left<(\textsl{d}_{\vp_s(\gamma)} H)_{\f}\mid H\in\gS(\g)^{\g}\right>_{\bbk}=\f$.
Since $(h,[\f,\f])\ne 0$, we conclude that
\[
    \hat\gamma_{\f}((\textsl{d}_{\vp_s(\gamma)}H)_{\f},(\textsl{d}_{\vp_{s'}(\gamma)}H')_{\f})\ne 0
\]
for all nonzero $s,s'$. Thus, by Theorem~\ref{thm:com}, $\gZ_{(\g,\f)}$ is not Poisson commutative.
\end{ex}

\begin{rmk} \label{rem-sl}
Example~\ref{contra-4} also implies that $\gZ_{(\g,\tri)}$ is not Poisson commutative if
$\g=\mathfrak{gl}_4$ is replaced with $\mathfrak{sl}_4$.
For, $\mathfrak{gl}_4=\z \oplus \mathfrak{sl}_4$ with $\z=\bbk I_4$, hence
$\gS(\mathfrak{gl}_4)^{\mathfrak{gl}_4}$ is generated by $\gS(\mathfrak{sl}_4)^{\gt{sl}_4}$ and $\z$.
For any reductive $\f\subset\mathfrak{sl}_4$, the algebra
$\gZ_{(\mathfrak{gl}_4,\f)}= \mathsf{alg}\langle \gZ_{(\gt{sl}_4,\f)},\z\rangle$
is Poisson commutative if and only if  $\gZ_{(\mathfrak{sl}_4,\f)}$ is.
\end{rmk}

Example~\ref{contra-4} easily generalises to the pairs $(\gln,\gt{gl}_m)$ with $n\ge m+2$. On the other
hand, one can prove that the algebra $\gZ_{(\gt{gl}_3,\tri)}$ or $\gZ_{(\gt{sl}_3,\tri)}$ is still
Poisson commutative.

\begin{ex}       \label{ex-MF}
Let us show that, for a special choice of $\f$,  the algebra $\gZ_{(\g,\f)}$ is rather close to
an  ${\eus MF}$-subalgebra.
\\  \indent
Let $h\in\g$ be a semisimple element such that $(h,h)\ne 0$. Set  $\f=\lg h\rg$. Then $\me\subset\g$ is
the orthogonal complement of $h$ with respect to $(\,\,,\,)$ and 
the bi-homogeneous decomposition of $H_j\in\gS(\g)^\g$ is
\[
      H_j=H_{j,0} h^{d_j} + H_{j,1}h^{d_j-1}+\ldots+ H_{j,k}h^{d_j-k} +\ldots+ H_{j,d_j},
\]
where $H_{j,k}\in\gS^k(\gt m)$. By definition, $\gZ_{(\g,\lg h\rg)}$ is generated by $H_{j,k}h^{d_j-k}$ with
$1\le j\le l$ and $0\le k\le d_j$. On the one hand, we had $\f=\lg h\rg$.
 On the other hand,
let $\gamma\in\g^*$ be such that $\gamma(\gt m)=0$ and
$\gamma(h)=1$. Actually, $\gamma=h$ under the identification of $\g$ and $\g^*$.
Then 
\[
     \partial_\gamma^k H_j=\sum_{r=k}^{d_j} r(r-1)\ldots(r-k+1)h^{r-k}H_{j,d_j-r} .
\]
If $H\in\gS^2(\g)^\g$ is the quadratic form corresponding to $(\,\,,\,)$, then $\partial_\gamma H=ch$ for
some $c\in\bbk^\times$. Hence $h\in (\cam)_\gamma$.
Arguing by induction on $k$, we obtain $H_{j,k}\in (\cam)_\gamma$ for $k\le d_j$. Thus
\[
    \gZ_{(\g,\lg h\rg)} \subset (\cam)_\gamma =(\cam)_h \subset \mathsf{alg}\left<\gZ_{(\g,\lg h\rg)}, h,h^{-1}\right>.
\]
\end{ex}

\section{Properties of the algebra  $\gZ_{(\g,\te)}$}
\label{sect:3}

\noindent
Suppose that $\g$ is semisimple. Let $\te$ be a Cartan subalgebra of $\g$ and $\Delta$ the root system of $(\g,\te)$. By Corollary~\ref{cor:f-abelian}, the algebra $\gZ_{(\g,\te)}$ is Poisson commutative, and our goal is to
prove that this algebra has a number of remarkable properties.
Let $\g_\gamma$ be the root space corresponding to $\gamma\in\Delta$ and let $e_\gamma\in
\g_\gamma$ be a nonzero vector.
Then $\me=\te^\perp=\bigoplus_{\gamma\in\Delta}\g_\gamma$.

Recall that $\{H_1,\dots,H_l\}$ is a set of homogeneous algebraically independent generators of
$\gS(\g)^\g$ and $\deg H_j=d_j$. One has $\sum_{j=1}^l d_j=\bb(\g)$. The vector space decomposition
$\g=\te\oplus\me$ provides the bi-homogeneous decomposition of each $H_j$:
\[
    H_j=\sum_{i=0}^{\textsl{d}_j} (H_j)_{(i,\textsl{d}_j-i)} ,
\]
where $(H_j)_{(i,\textsl{d}_j-i)}\in \gS^i(\te)\otimes \gS^{\textsl{d}_j-i}(\me)\subset \gS^{d_j}(\g)$.
Recall that $\gZ:=\gZ_{(\g,\te)}$ is the algebra generated by
\beq     \label{eq:bihom}
    \{ (H_j)_{(i,d_j-i)} \mid j=1,\dots,l; i=0,1,\dots, d_j\} .
\eeq
Since each $H_j$ is $\g$-invariant, all the bi-homogeneous components in~\eqref{eq:bihom} are
$\te$-invariant. Hence $\gZ\subset\gS(\g)^{\te}$. The total number of these functions is
$\sum_{j=1}^l (d_j+1)=\bb(\g)+l$, but some of them are identically equal to zero.
Indeed, $(H_j)_{(d_j-1,1)}\in \gS^{d_j-1}(\te)\otimes \me$ and
$\me^\te=\{0\}$, hence $(H_j)_{(d_j-1,1)} \equiv 0$ for $j=1,\dots,l$. Therefore, the number of nonzero
generators of $\gZ$ is at most $\bb(\g)$.

The bi-homogeneous component $(H_j)_{(d_j,0)}\in \gS^{d_j}(\te)$ is the restriction of $H_j$ to
$\te\simeq\te^*$. Therefore, by the Chevalley restriction theorem, the polynomials $(H_j)_{(d_j,0)}$,
$j=1,\dots,l$, are the free generators of $\gS(\te)^W$, where $W$ is the Weyl group of $\te$. This
means that having replaced $(H_1)_{(d_1,0)},\dots, (H_l)_{(d_l,0)}$ with a basis of $\te$ and keeping
intact all other bi-homogeneous components (generators of $\gZ$), we obtain a larger subalgebra
$\tilde\gZ$, which is an algebraic extension of $\gZ$. Furthermore, since $\gZ\subset\gS(\g)^\te$,
$\tilde\gZ$  is still Poisson commutative.

Once again, we use the map $\vp_s$ defined in Section~\ref{subs:contr-compat}.
By~\eqref{eq:ko-re-cr}, if $\vp_s(\gamma)\in\g^*_{\sf reg}$, then $\textsl{d}_{\vp_s(\gamma)} \gS(\g)^{\g}=\g^{\vp_s(\gamma)}$ ; and by~\eqref{d-phi}, we have
\begin{equation} \label{d-phi2}
\textsl{d}_\gamma \vp_s(\gS(\g)^{\g})=\vp_s(\g^{\vp_s(\gamma)}).
\end{equation}
As before, we identify $\g$ and $\g^*$.

\begin{lm}\label{rel-MF}
Let $h\in\te$ and $x\in\me$ be such that $(h + \bbk x)\cap \g^*_{\sf reg}\ne\varnothing$. Then
$\textsl{d}_{h+x}\tilde\gZ =\te +\textsl{d}_h((\cam)_x)$. Moreover, if $h\in\g^*_{\sf reg}$, then
$\textsl{d}_{h+x}\tilde\gZ = \textsl{d}_h((\cam)_x)=\textsl{d}_x((\cam)_h)$.
\end{lm}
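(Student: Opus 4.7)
The plan is to exploit the generating set of $\tilde\gZ$ provided by~\eqref{Vand}, namely $\te$ together with $\{\vp_s(H)\mid H\in\gS(\g)^\g,\,s\in\bbk^\times\}$. Since the differentials of a generating set span the differential of the algebra at every point, and since $\textsl{d}_{h+x}\te=\te$, formula~\eqref{d-phi} reduces the task to analysing
\[
    \textsl{d}_{h+x}\vp_s(H)=\vp_s(\textsl{d}_{h+sx}H),\qquad H\in\gS(\g)^\g,\ s\in\bbk^\times.
\]

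For the core computation I would use the Taylor expansion $H(y+sx)=\sum_{k=0}^{d}s^kH_k(y)$ for homogeneous $H\in\gS(\g)^\g$ of degree $d$. By definition, the coefficients $H_k(y)=(\partial_x^kH)(y)/k!$ generate the Mishchenko--Fomenko algebra $(\cam)_x$, so that $\textsl{d}_h(\cam)_x$ is the $\bbk$-span of $\{\textsl{d}_hH_k\mid H\in\gS(\g)^\g,\,k\ge 0\}$. Differentiating in $y$ at $y=h$ gives $\textsl{d}_{h+sx}H=\sum_ks^k\textsl{d}_hH_k$, and applying $\vp_s$ yields
\[
    \vp_s(\textsl{d}_{h+sx}H)=\sum_{k\ge 0}s^k(\textsl{d}_hH_k)_\te+\sum_{k\ge 0}s^{k+1}(\textsl{d}_hH_k)_\me .
\]
The Vandermonde trick used for~\eqref{Vand}, applied to the polynomial dependence on $s$, identifies the $\bbk$-span of the left-hand side (as $s$ varies over $\bbk^\times$ and $H$ over $\gS(\g)^\g$) with the span of its $s$-coefficients. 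Since the $\te$-components of those coefficients are already absorbed by the summand $\te$, the quotient modulo $\te$ is generated by the $\me$-parts $(\textsl{d}_hH_k)_\me$. A short check using the decomposition $v=v_\te+v_\me$ shows $\te+(\textsl{d}_h(\cam)_x)_\me=\te+\textsl{d}_h(\cam)_x$, giving the desired equality $\textsl{d}_{h+x}\tilde\gZ=\te+\textsl{d}_h(\cam)_x$.

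For the "moreover" assertion, suppose $h\in\g^*_{\sf reg}$. Then $h$ is a regular element of $\te$, so $\g^h=\te$, and Kostant's criterion~\eqref{eq:ko-re-cr} gives $\textsl{d}_h\gS(\g)^\g=\te$. Because $(\cam)_x\supset\gS(\g)^\g$, this forces $\te\subset\textsl{d}_h(\cam)_x$ and the $\te$-summand is absorbed, yielding $\textsl{d}_{h+x}\tilde\gZ=\textsl{d}_h(\cam)_x$. For the symmetry $\textsl{d}_h(\cam)_x=\textsl{d}_x(\cam)_h$, I would use the homogeneity identity $\textsl{d}_{h+tx}F=t^{d-1}\textsl{d}_{x+t^{-1}h}F$ valid for homogeneous $F\in\gS(\g)^\g$ of degree $d$ and $t\in\bbk^\times$: by the Taylor-plus-Vandermonde argument of the preceding paragraph, $\textsl{d}_h(\cam)_x$ equals the $\bbk$-span of $\{\textsl{d}_{h+tx}F\mid t\in\bbk,\,F\in\gS(\g)^\g\}$, and via the substitution $s=t^{-1}$ (which permutes $\bbk^\times$) this span coincides with the corresponding span computed at $x$, i.e.\ with $\textsl{d}_x(\cam)_h$.

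I expect the main obstacle to be the careful bookkeeping in the Vandermonde step: one must verify that applying $\vp_s$ to the $s$-expansion of $\textsl{d}_{h+sx}H$ genuinely recovers the $\me$-parts of all of $\textsl{d}_h(\cam)_x$ modulo $\te$, and conversely that no element is lost. A secondary subtlety is confirming the role of the hypothesis $(h+\bbk x)\cap\g^*_{\sf reg}\ne\varnothing$, which is needed to guarantee that $\textsl{d}_{h+sx}\gS(\g)^\g$ attains its maximal dimension for generic $s$, so that the spans on both sides indeed fit into the expected framework.
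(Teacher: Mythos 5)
Your proof is correct, but it takes a genuinely different route from the paper's. The paper restricts the parameter $s$ to $\Omega=\{s\in\bbk^\times\mid h+sx\in\g^*_{\sf reg}\}$ (this is where the hypothesis $(h+\bbk x)\cap\g^*_{\sf reg}\ne\varnothing$ enters), uses Kostant's criterion~\eqref{eq:ko-re-cr} together with~\eqref{d-phi2} to rewrite $\textsl{d}_{h+x}\vp_s(\gS(\g)^{\g})$ as $\vp_s(\g^{h+sx})$, and then quotes Lemma~1.3 and Eq.~(2.3) of~\cite{kruks} for the identities $\sum_{s}\g^{h+sx}=\textsl{d}_h((\cam)_x)$ and $\textsl{d}_h((\cam)_x)=\textsl{d}_x((\cam)_h)$. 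You instead work purely with Taylor coefficients: expanding $\textsl{d}_{h+sx}H=\sum_k s^k\,\textsl{d}_h(\partial_x^kH/k!)$ and applying the Vandermonde argument twice, you in effect reprove the relevant parts of those cited results from scratch, replacing the appeal to centralisers by the observation that the $\te$-components of the coefficients are absorbed by the summand $\te$. Both arguments are sound. The paper's version is shorter modulo the references and keeps the description in terms of the spaces $\g^{h+sx}$, which is reused later (e.g.\ in the proof of Theorem~\ref{thm:free-max}); yours is self-contained and, as a bonus, shows that the first equality $\textsl{d}_{h+x}\tilde\gZ=\te+\textsl{d}_h((\cam)_x)$ actually holds without the regularity hypothesis, which is needed only on the paper's route (while the `moreover' part still requires $h\in\g^*_{\sf reg}$ exactly as you use it, to get $\g^h=\te\subset\textsl{d}_h\gS(\g)^\g$). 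One small point to tidy: in the final symmetry argument the spans should be taken over $t\in\bbk^\times$ on both sides; that this loses nothing --- i.e.\ that the values at $t=0$, resp.\ $u=0$, already lie in these spans --- again follows from the Vandermonde identification of the span of the values of a vector-valued polynomial with the span of its coefficients.
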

\begin{proof}
The assumption  $(h\oplus\bbk x)\cap \g^*_{\sf reg}\ne\varnothing$ implies that
$\Omega:=\{s\in \bbk^\times\mid  h+sx\in\g^*_{\sf reg}\}$ is a nonempty open subset of $\bbk^\times$. Since $\Omega$ is infinite, we can strengthen~\eqref{Vand} as
\beq          \label{Vand-O}
             \gZ=\mathsf{alg}\langle \vp_s(H)\mid H\in\gS(\g)^{\g},s\in \Omega\rangle.
\eeq
Combining this with~\eqref{d-phi2}, we obtain 
\[
   \textsl{d}_{h+x}\tilde\gZ =\te +\sum_{s\in\Omega} \textsl{d}_{h+x}\vp_s(\gS(\g)^{\g})= \te+
   \sum_{s\in\Omega} \vp_s(\g^{h+sx}) =\te +\sum_{s\in\Omega} \g^{h+sx}.
\]
Set $\Omega'=\Omega\sqcup\{0\}$ if $h\in\g^*_{\sf reg}$ and  $\Omega'=\Omega$ otherwise. Then
the equality $\sum_{s\in\Omega'} \g^{h+sx}= \textsl{d}_h((\cam)_x)$ follows from
\cite[Lemma~1.3]{kruks}, see also the proof of Lemma~2.1 in~\cite[Sect.~2]{kruks}.
If $h\not\in\g^*_{\sf reg}$, we are done.
If $h\in\g^*_{\sf reg}$, then $\g^h=\te$ and $\te +\sum_{s\in\Omega} \g^{h+sx}=\sum_{s\in\Omega'} \g^{h+sx}$.

Finally, we recall that $\textsl{d}_h((\cam)_x)=\textsl{d}_x((\cam)_h)$ for any $x,h\in\g$ by \cite[Eq.~(2${\cdot}$3)]{kruks}. 
\end{proof}

\begin{thm}         \label{thm:free-max}
For $\gZ=\gZ_{(\g,\te)}$ and $\tilde\gZ$ as above, we have
\begin{itemize}
\item[\sf (i)] \  $\trdeg\gZ=\trdeg\tilde\gZ=\bb(\g)$ and both algebras $\gZ$ and $\tilde\gZ$ are polynomial;
\item[\sf (ii)] \ both $\gZ$ and $\tilde\gZ$ are complete on each regular orbit;
\item[\sf (iii)] \ $\tilde\gZ$ is a maximal Poisson commutative subalgebra of\/ $\gS(\g)$.
\end{itemize}
\end{thm}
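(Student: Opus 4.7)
The plan is to reduce parts (i) and (ii) to known properties of Mishchenko--Fomenko subalgebras via Lemma~\ref{rel-MF}, and to settle (iii) by a separate argument exploiting the maximality of a single MF subalgebra $(\cam)_h$ for regular $h$.

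\textbf{For (i)}, I would first count the natural generators: by the Chevalley restriction theorem, the components $(H_j)_{(d_j,0)}$ freely generate $\gS(\te)^W$, so replacing them by a basis of $\te$ yields an algebraic extension $\gZ\subset\tilde\gZ$ in which $\tilde\gZ$ is generated by $\rk\g+\sum_{j=1}^l(d_j-1)=\bb(\g)$ elements (using $(H_j)_{(d_j-1,1)}=0$, as $\me^\te=0$); likewise $\gZ$ has $\sum_{j=1}^l d_j=\bb(\g)$ nonzero generators. To establish algebraic independence it will suffice to exhibit one point $\xi$ with $\dim\textsl{d}_\xi\tilde\gZ=\bb(\g)$. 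Picking $h\in\te\cap\g^*_{\sf reg}$ and a generic $x\in\me$, Lemma~\ref{rel-MF} yields $\textsl{d}_{h+x}\tilde\gZ=\textsl{d}_x((\cam)_h)$, and since $(\cam)_h$ is a classical Mishchenko--Fomenko subalgebra of transcendence degree $\bb(\g)$~\cite{mrl} whose differentials attain this dimension on a dense open subset, the required equality will follow. Polynomiality of $\gZ$ and $\tilde\gZ$ is then immediate from the generator count, and $\tilde\gZ$ is integral over $\gZ$ because $\gS(\te)$ is integral over $\gS(\te)^W$.

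\textbf{For (ii)}, I would observe that completeness of $\tilde\gZ$ on a regular orbit $G\xi$ amounts to $\dim\textsl{d}_{\xi'}\tilde\gZ=\bb(\g)$ at a generic $\xi'\in G\xi$, since $\tilde\gZ\supset\gS(\g)^\g$ combined with Kostant's criterion~\eqref{eq:ko-re-cr} gives $\textsl{d}_{\xi'}\tilde\gZ\supset\g^{\xi'}$, so the image of $\textsl{d}_{\xi'}\tilde\gZ$ in $\g/\g^{\xi'}\simeq T^*_{\xi'}(G\xi)$ has dimension $\dim\textsl{d}_{\xi'}\tilde\gZ-\rk\g$. Every regular orbit contains points $\xi'=h'+x'$ with $h'=(\xi')_\te$ regular in $\g$ (an open non-empty condition on $G\xi$, verifiable by a direct $G$-conjugacy argument), and at such $\xi'$ Lemma~\ref{rel-MF} reduces the desired dimension equality to the classical completeness of the Mishchenko--Fomenko subalgebra $(\cam)_{h'}$, exactly as in the proof of (i).

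\textbf{For (iii)}, suppose that $\tilde\gZ\subsetneq A\subset\gS(\g)$ with $A$ Poisson commutative. Since $\trdeg A\le\bb(\g)=\trdeg\tilde\gZ$, the extension is algebraic, and at a generic $\xi=h+x$ with $h\in\te\cap\g^*_{\sf reg}$ we would have $\textsl{d}_\xi A\subseteq\textsl{d}_\xi\tilde\gZ=\textsl{d}_x((\cam)_h)$, forcing equality of differentials. Because $(\cam)_h$ is a maximal Poisson commutative subalgebra of $\gS(\g)$ for regular $h$, running this comparison over all $h\in\te\cap\g^*_{\sf reg}$ should force any additional element of $A$ to be algebraically dependent on $\tilde\gZ$; the polynomiality of $\tilde\gZ$ from~(i) would then rule out a proper algebraic extension inside $\gS(\g)$. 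The main obstacle will be making this last argument rigorous: translating pointwise differential equalities into a global statement $A=\tilde\gZ$ requires that no ``transcendental'' behaviour hide between the family $\{(\cam)_h:h\in\te\cap\g^*_{\sf reg}\}$ and its ``join'' $\tilde\gZ$, and the polynomial structure of $\tilde\gZ$ must be carefully combined with the algebraic closure properties of these MF-subalgebras.
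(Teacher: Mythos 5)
Your part (i) is essentially the paper's argument (the paper just makes the witness point explicit: it takes a principal $\tri$-triple $\{e,h,f\}$ with $h\in\te$, $e,f\in\me$ and uses $\xi=h+x$ with $x\in\lg e,f\rg$, so that the whole plane $\bbk h\oplus\bbk x$ meets $\g_{\sf sing}$ only at $0$ --- this is exactly the hypothesis under which $\dim\textsl{d}_h((\cam)_x)=\bb(\g)$ holds, and your ``generic $x\in\me$'' would need the same plane condition checked). But parts (ii) and (iii) have genuine gaps. For (ii), your key claim --- that \emph{every} regular orbit contains a point $\xi'=h'+x'$ at which Lemma~\ref{rel-MF} plus properties of $(\cam)_{h'}$ give $\dim\textsl{d}_{\xi'}\tilde\gZ=\bb(\g)$ --- is not justified and is not merely a routine conjugacy statement: what you actually need at $\xi'$ is that the plane $\lg h',x'\rg$ avoids $\g_{\sf sing}\setminus\{0\}$, and there is no reason a priori that an arbitrary regular orbit meets this locus (regular orbits have codimension $\rk\g$, so a codimension-$2$ bad set need not miss them). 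The paper avoids this entirely: it verifies completeness at a \emph{single} regular nilpotent point $h+x\in\lg e,h,f\rg$ (possible because nonzero elements of a principal $\tri$ are all regular) and then propagates completeness from the regular nilpotent orbit to all regular orbits by the deformation argument of \cite[Cor.\,2.6]{kruks}. That degeneration step is the missing idea in your plan.

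For (iii) the gap is larger, and you partly acknowledge it. Two points. First, your fallback ``the polynomiality of $\tilde\gZ$ would rule out a proper algebraic extension inside $\gS(\g)$'' is false: $\gZ\subset\tilde\gZ$ is itself a proper algebraic extension of one polynomial algebra by another, and $\bbk[x^2]\subset\bbk[x]$ shows polynomiality gives no such rigidity. Second, equality of differentials at generic points does not yield $A=\tilde\gZ$; the tool one actually needs is the algebraic-closedness criterion of \cite[Theorem~1.1]{ppy}, which applies to a graded polynomial algebra provided the locus $Y=\{\gamma\mid \dim\textsl{d}_\gamma\tilde\gZ<\bb(\g)\}$ has codimension at least $2$. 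Establishing $\codim Y\ge 2$ is the real content of the paper's proof of (iii): one must analyse $\gamma=h'+x'$ with $h'\in\te_{\sf sing}$, and for generic subregular $h'$ on a reflection hyperplane $\eus H_\nu$ show (via the computation $\g^{h'+sf}\subset\eus H_\nu\oplus\ut^-$ of Lemma~\ref{lm:cent-subreg}) that $\textsl{d}_{h'+f}\tilde\gZ=\te+\eus H_\nu\oplus\ut^-$ still has dimension $\bb(\g)$, so that $Y\cap(\te_{\sf sing}\times\me)$ is not a divisor. None of this is reachable from the maximality of the individual $(\cam)_h$, so your outline for (iii) would not close without importing these ingredients.
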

\begin{proof}
{\sf (i)} \ Since $\gZ\subset\tilde\gZ$ is an algebraic extension, the first equality follows.
Take a principal
$\tri$-triple $\{e,h,f\}\subset\g$ such that $h\in\te$ and $e,f \in\me$. Note that any nonzero
element of $\lg e,h,f\rg$ is regular in $\g\simeq\g^*$. Pick a nonzero $x\in\langle e,f \rangle
\subset\me$ and consider the subspace
\[
     \textsl{d}_{h+x}\tilde\gZ:=\{ \textsl{d}_{h+x} F\mid  F\in\tilde\gZ\} \subset \g.
\]
Because $\vp_s(h+x)=h+sx\in\g^*_{\sf reg}$ for all $s$, we have
$\textsl{d}_{h+x}\tilde\gZ =\textsl{d}_h((\cam)_x)$ by Lemma~\ref{rel-MF}.

Using properties of $\eus{MF}$-subalgebras, see~\cite{mrl},~\cite[Cor.~1.6\,\&\,Lemma~2.1]{kruks}
and the fact that $(\bbk x\oplus\bbk h)\cap\g_{\sf sing}=\{0\}$, we obtain
$\dim\textsl{d}_{h+x}\tilde\gZ=\bb(\g)$.
It follows that $\trdeg\tilde\gZ\ge \bb(\g)$, and
since $\trdeg\ca\le \bb(\g)$ for any \PC\ subalgebra, we actually get the equality.
As both
$\gZ$ and $\tilde\gZ$ have at most $\bb(\g)$ generators, they are polynomial.
Therefore, $\gZ$ is freely generated by
\[
   \{(H_j)_{(i,d_j-i)}\mid 1\le j\le l;\ i=0,1,\dots,d_j-2,d_j\} ,
\]
while $\tilde\gZ$ is freely generated by a basis of $\te$ and the components $(H_j)_{(i,d_j-i)}$, where    $1\le j\le l$ and $i=0,1,\dots, d_j-2$.

{\sf (ii)} \ In part {\sf (i)}, we proved that $\dim \textsl{d}_{h+x}\tilde\gZ=\bb(g)$.
Then~\cite[Lemma\,1.2]{kruks} implies that $\tilde\gZ$ is complete on the orbit $G(h+x)$. For an appropriate
choice of $x\in\langle e,f\rangle$, we obtain a nilpotent element $h+x\in\lg e,h,f\rg\simeq\tri$. Hence $\tilde\gZ$ is complete on the regular nilpotent orbit. Then a standard deformation argument,
see~\cite[Cor.\,2.6]{kruks}, shows that $\tilde\gZ$ is complete on every regular orbit.
The same line of argument applies to $\gZ$, since $\textsl{d}_{h+x}\gS(\te)^W=\textsl{d}_h\gS(\g)^\g=\te$ and
$\textsl{d}_{h+x}\gZ=\textsl{d}_{h+x}\tilde\gZ$.

{\sf (iii)} \ The maximality of $\tilde\gZ$ will follow from the fact that
$Y=\{\gamma\in\g^*\mid\dim \textsl{d}_\gamma \tilde\gZ<\bb(\g)\}$ is of codimension ${\ge}2$ in $\g^*$
(see below). We identify $\g$ and $\g^*$ via the Killing form and regard $Y$ as a subvariety of
$\g$. Write $\gamma=h'+x'$ with $h'\in\te$,
$x'\in\me$. If $\lg h',x'\rg \cap \g_{\sf sing} = \{0\}$, then
$\dim\textsl{d}_h((\cam)_x)=\bb(\g)$~\cite[Theorem\,2.5]{mrl} and
$\dim\textsl{d}_\gamma\tilde\gZ=\bb(\g)$ by Lemma~\ref{rel-MF}.

Consider the map $\psi: \g_{\sf sing} \times \bbk \to \g$ defined by
$\psi(\xi,s)=\xi_\te+s\xi_{\me}$ and let $\tilde Y$ be the closure of $\Ima(\psi)$.
Set $\te_{\sf sing}:=\te\cap\g_{\sf sing}$ and $\me_{\sf sing}:=\me\cap\g_{\sf sing}$. Then
\[
  Y\subset \tilde Y\cup (\te_{\sf sing}\times \me) \cup (\te\times \me_{\sf sing}) .
\]
\indent
\textbullet \quad Since $\codim\g_{\sf sing}=3$, we have $\dim\tilde Y\le \dim\g-2$.
\\ \indent
\textbullet \quad As $\me_{\sf sing}$ is conical and $\lg e,f\rg \cap \me_{\sf sing}=\{0\}$, we have
$\dim\me_{\sf sing}\le \dim\me-2$. Therefore, $\te\times \me_{\sf sing}\subset\g$ does not contain divisors.
\\ \indent
\textbullet \quad We prove below that $\dim(Y\cap (\te_{\sf sing}\times \me))\le\dim\g -2$, which
yields the required estimate of $\codim Y$.

The subset $\te_{\sf sing}\subset\te$ is the union of all reflection hyperplanes in $\te$. That is,
if 
\[\eus H_\gamma=\{x\in\te\mid (\gamma,x)=0\},
\] then
$\te_{\sf sing}=\bigcup_{\gamma\in\Delta}\eus H_\gamma$. (Of course,
$\eus H_\gamma=\eus H_{-\gamma}$.) Suppose that $h'\in \eus H_\nu$ is generic, i.e.,
$h'\in \eus H_\nu\setminus \bigcup_{\gamma\ne\pm\nu}\eus H_\gamma$.
Then $h'\in\g$ is subregular and
\[
   \g^{h'}=\te\oplus\g_\nu\oplus\g_{-\nu}=\eus H_\nu\oplus\lg e_\nu, h_\nu, e_{-\nu}\rg
   \simeq \eus H_\nu\oplus\tri ,
\]
where $h_\nu=[e_\nu,e_{-\nu}]$.
Note also that $\eus H_\nu=\textsl{d}_{h'}\gS(\g)^{\g}\subset\te$, cf.~\cite[Lemma~4.9]{OY-alg}.
Without loss of generality, we may assume that $\nu$ is a {\bf simple} root with respect to some choice
of $\Delta^+\subset\Delta$. Let $\Pi\subset\Delta^+$ be the corresponding set of simple roots and
$\me=\ut^+\oplus\ut^-$. We may also assume that
$e=\sum_{\ap\in\Pi}c_\ap e_\ap\in\ut$ with $c_\ap\in\bbk^\times$ and $f=\sum_{\ap\in\Pi}e_{-\ap}\in\ut^-$ for a principal $\tri$-triple
$\{e,h,f\}$ with $h\in\te$, cf.~\cite[Theorem\,4]{ko63}. Then $f+\be\subset\g_{\sf reg}$
by~\cite[Lemma\,10]{ko63}. In particular, $h'+sf\in\g_{\sf reg}$ for any $s\in\bbk^\times$.

\begin{lm}          \label{lm:cent-subreg}
If  $h'\in\eus H_\nu$ is generic, then $\g^{h'+sf}\subset \eus H_\nu\oplus\ut^-$ for any $s\ne 0$.
\end{lm}
\begin{proof}
As is well known,  $(\g^{h'+sf})^\perp=[\g, h'+sf]$. Hence it suffices to prove that
$[\g, h'+sf]\supset (\eus H_\nu\oplus\ut^-)^\perp=\lg h_\nu\rg \oplus\ut^-$.

Since $h'+sf\in\be^-$ is regular in $\g$, we have $\g^{h'+sf}\subset \be^-$. Hence
$[\g, h'+sf]\supset (\be^-)^\perp=\ut^-$. Next,
$[e_\nu, h'+sf]=[e_\nu,sf]=s[e_\nu,e_{-\nu}]=s{\cdot}h_\nu\in [\g,h'+sf]$.
\end{proof}
Now, set
\[
     \BV:= \textsl{d}_{h'} ((\cam)_{f})=\sum_{s\ne 0} \g^{h'+sf},
\]
where the last equality stems from~\cite[Lemma~1.3]{kruks}. 
On the one hand,
$\BV\subset\eus H_\nu\oplus\ut^-$ by the above lemma. On the other hand, $\dim\BV=\bb(\g)-1$ in view
of~\cite[proof of Theorem~2.4]{kruks}. Hence $\BV=\eus H_\nu\oplus\ut^-$.

The differentials $\textsl{d}_{h'+f}((H_j)_{(d_j,0)})=\textsl{d}_{h'}((H_j)_{(d_j,0)})=\textsl{d}_{h'}H_j$ with $1\le j\le l$ are
linearly dependent, hence 
$\dim\textsl{d}_{h'+f} \gZ\le \bb(\g)-1$. 
Recall that $h'+sf\in\g_{\sf reg}$ for any $s\in\bbk^\times$. Combining~\eqref{Vand-O} with \eqref{d-phi2}, we obtain  
\[
   \BV_{\gZ}:=\textsl{d}_{h'+f} \gZ\supset \sum_{s\ne 0} \vp_s(\g^{h'+sf})=\vp_s(\BV)=\eus H_\nu\oplus\ut^-.
\]
Thus 
$\mathbb V_{\gZ}=\eus H_\nu+\ut^-$. Next $\textsl{d}_{h'+f}\tilde\gZ \ne \textsl{d}_{h'+f}\gZ$, since $\te\not\subset\BV_{\gZ}$.
We obtain $\dim\textsl{d}_{h'+f}\tilde\gZ=\bb(\g)$, which means that
$\dim\textsl{d}_{\gamma}\tilde\gZ=\bb(\g)$ on a dense open subset of $\eus H_\nu\times\me$.
Since $\nu\in\Delta$ is arbitrary, this implies that
$\dim (Y\cap (\te_{\sf sing}\times\me))\le \dim\g-2$.
\\ \indent
Thus, we have proved that $\dim Y\le \dim\g-2$.

Since $\tilde\gZ$ is generated by algebraically independent homogeneous polynomials and
$\codim Y\ge 2$, it follows from~\cite[Theorem~1.1]{ppy} that $\tilde\gZ$ is an algebraically closed subalgebra of $\gS(\g)$ (i.e., if $F\in \gS(\g)$ is algebraic over the quotient field of $\tilde\gZ$, then $F\in \tilde\gZ$).
An inclusion $\tilde\gZ\subset\gA\subset\gS(\g)$, where $\{\gA,\gA\}=0$, is
only possible if $\gA$ is an algebraic extension of $\tilde\gZ$, because $\trdeg\tilde\gZ=\bb(\g)$ and
$\trdeg\gA\le \bb(\g)$. Therefore we must have $\tilde\gZ=\gA$.
\end{proof}

\begin{rmk}
We know that $\tilde\gZ\subset \gS(\g)^\te$ and $\trdeg\tilde\gZ=\bb(\g)$. If $h\in\te^*_{\sf reg}$, then
these two properties are also satisfied for $(\cam)_{h}$~\cite{mrl}.
One may say that $\tilde\gZ$ resembles all such $\eus{MF}$-subalgebras.
However, there is no choice of $h\in\te^*$ involved in the construction of $\tilde\gZ$ and
$\gZ\subset\gS(\g)^{N_G(\te)}$ unlike any of $(\cam)_{h}$ with $h\in\te^*_{\sf reg}$.
\\ \indent
Furthermore, by Lemma~\ref{rel-MF},
we have 
$\textsl{d}_{h+x}\tilde\gZ=\textsl{d}_h((\cam)_x)=\textsl{d}_x((\cam)_h)$ for any $x\in\me$ and
$h\in\te^*_{\sf reg}$. 
It is tempting to further investigate this relationship.

Another  intriguing task is to produce a quantisation of $\tilde\gZ$, i.e., a commutative
subalgebra of the enveloping algebra $\U(\g)$ such that its associated graded algebra is
$\tilde\gZ$.
\end{rmk}

\end{document}